
\documentclass[12pt]{article}
\usepackage{amsmath,amssymb,amsthm,fullpage}

\title{The Error in an Alternating Series}


\date{}

\theoremstyle{plain}
\newtheorem{thm}{Theorem}             

\theoremstyle{definition}


\makeatletter
\def\section{\@startsection{section}{1}{\z@}{-3.5ex plus -1ex minus
			  -.2ex}{2.3ex plus .2ex}{\large\bf}}
\def\subsection{\@startsection{subsection}{2}{\z@}{-3.25ex plus -1ex
			  minus -.2ex}{1.5ex plus .2ex}{\normalsize\bf}}
\makeatother


\renewcommand{\geq}{\geqslant}  
\renewcommand{\leq}{\leqslant}  

\newcommand{\Dl}{\Delta}        



\newcommand{\word}[1]{\quad\mbox{#1}\quad} 



\begin{document}

\maketitle

\section{Introduction} 

Mathematicians have studied the alternating series
\begin{equation}
\label{A} 
\sum_{n=1}^\infty (-1)^{n-1} a_n = a_1 - a_2 + a_3 - a_4 +\cdots
\end{equation}
since the dawn of analysis.  In January of 1714, the great
Gottfried Leibniz wrote a letter to
Johann Bernoulli
in which he explicitly stated his famous criterion for the convergence
of~\eqref{A} as well as the corresponding error estimate
\cite[p.~33]{Ferraro}.

\begin{thm} 
If $a_n \geq 0$ for $n = 0,1,\dots$ and if the sequence $(a_n)$ 
decreases monotonically to zero, then the series~\eqref{A} converges.
Let $L$ be its sum.  Moreover, let
\begin{align}
\label{Sn} 
S_n &:= a_1 - a_2 + a_3 - a_4 +\cdots+ (-1)^{n-1} a_n,
\\
R_n &:= L - S_n,
\nonumber
\end{align}
denote its $n^\mathit{th}$ partial sum and remainder, respectively.
Then
\begin{equation}
\label{E} 
|R_n| \leq a_{n+1},
\end{equation}
and $R_n$ has the sign~$(-1)^n$.
\end{thm}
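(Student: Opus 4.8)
The plan is to analyze the even- and odd-indexed partial sums separately, prove that each forms a bounded monotone sequence, and conclude convergence from the monotone convergence theorem; the same analysis will simultaneously pin down both the sign and the size of the remainder. First I would record the two elementary identities
\begin{align*}
S_{2m+2} - S_{2m} &= a_{2m+1} - a_{2m+2} \geq 0,\\
S_{2m+1} - S_{2m-1} &= -(a_{2m} - a_{2m+1}) \leq 0,
\end{align*}
which say that the even partial sums $(S_{2m})$ increase while the odd partial sums $(S_{2m-1})$ decrease, both inequalities coming directly from the monotonicity of $(a_n)$. Together with $S_{2m-1} - S_{2m} = a_{2m} \geq 0$, these yield the nested chain
\[
S_2 \leq S_4 \leq \cdots \leq S_{2m} \leq S_{2m-1} \leq \cdots \leq S_3 \leq S_1,
\]
so the increasing subsequence is bounded above (by $S_1$, say) and the decreasing subsequence is bounded below.

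Next I would invoke the monotone convergence theorem to obtain limits for the two subsequences and then show they agree. Since $S_{2m} - S_{2m-1} = -a_{2m} \to 0$ as $m \to \infty$, the two limits coincide; call the common value $L$. Because both the even and odd subsequences converge to $L$, the full sequence $(S_n)$ converges to $L$, which establishes convergence of the series. The nested chain also records that every even partial sum lies below $L$ and every odd partial sum lies above it.

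For the error estimate I would exploit a single clean observation: $L$ always lies between two consecutive partial sums $S_n$ and $S_{n+1}$. Indeed, if $n$ is even then $S_n \leq L \leq S_{n+1}$, while if $n$ is odd then $S_{n+1} \leq L \leq S_n$, both by the location of the even and odd sums relative to $L$. From this, $R_n = L - S_n$ has the sign of $S_{n+1} - S_n = (-1)^n a_{n+1}$, namely $(-1)^n$, and moreover
\[
|R_n| = |L - S_n| \leq |S_{n+1} - S_n| = a_{n+1},
\]
which is exactly~\eqref{E}.

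I expect the main obstacle to be organizational rather than computational: the individual inequalities are immediate from monotonicity, but one must treat the even and odd cases carefully and in the right order so that the sandwiching of $L$ between consecutive partial sums delivers the sign of $R_n$ and the bound $|R_n| \leq a_{n+1}$ at once. Keeping the direction of each inequality straight—rather than any delicate estimate—is where the care is needed.
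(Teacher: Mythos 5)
Your proof is correct and takes essentially the same route as the paper: the interleaving chain $S_2 \leq S_4 \leq \cdots \leq S_{2m} \leq S_{2m-1} \leq \cdots \leq S_3 \leq S_1$ is exactly the statement that the intervals $[S_{2r},S_{2r-1}]$ are nested with lengths $a_{2r}\to 0$, which the paper feeds to Cantor's nested-interval theorem where you invoke the (interchangeable) monotone convergence theorem. Your extraction of the sign of $R_n$ and the bound $|R_n|\leq a_{n+1}$ from the fact that $L$ is sandwiched between consecutive partial sums is precisely the step the paper leaves as ``immediate.''
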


It is amazing that the  error estimate~\eqref{E} remained virtually
unimproved for almost 250~years! Then, in 1962, 
Philip Calabrese, a sophomore (!) at the University of
Illinois, proved the following significant refinement~\cite{Cal}.

\begin{thm} 
Let $\Dl a_n := a_n - a_{n+1}$. If, additionally, the sequence
$(\Dl a_n)$ converges monotonically to zero, then
\begin{equation}
\label{E2} 
\frac{a_{n+1}}{2} < |R_n| < \frac{a_n}{2} \,.
\end{equation}
\end{thm}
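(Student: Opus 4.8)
The plan is to reduce both inequalities in \eqref{E2} to a single monotonicity statement about the remainders. First I would record the sign information supplied by Theorem~1: since $R_n$ has sign $(-1)^n$, the remainder equals the tail $R_n = (-1)^n\bigl(a_{n+1}-a_{n+2}+a_{n+3}-\cdots\bigr)$, so that $|R_n| = a_{n+1}-a_{n+2}+a_{n+3}-\cdots$ is itself an alternating series of the Leibniz type. The crucial elementary observation is a \emph{complementary-remainder identity}: from $R_{n+1}=R_n-(-1)^n a_{n+1}$ together with the known signs of $R_n$ and $R_{n+1}$, one obtains
\begin{equation*}
|R_n|+|R_{n+1}| = a_{n+1}\qquad (n\geq 0),
\end{equation*}
where $R_0=L$.

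Next I would use this identity to translate \eqref{E2} into a monotonicity assertion. The lower bound $a_{n+1}/2<|R_n|$ is, via the identity, exactly the statement $|R_{n+1}|<|R_n|$; and the upper bound $|R_n|<a_n/2$ is, via the same identity at index $n-1$ (namely $|R_{n-1}|+|R_n|=a_n$), exactly the statement $|R_n|<|R_{n-1}|$. Thus both halves of \eqref{E2}, for every $n$, follow at once provided the sequence $\bigl(|R_n|\bigr)$ is \emph{strictly decreasing}.

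To prove the strict decrease I would bring in the convexity hypothesis. Writing $\Dl^2 a_k := \Dl a_k-\Dl a_{k+1}$, the assumption that $(\Dl a_n)$ decreases to zero says precisely that $\Dl^2 a_k\geq 0$. Grouping the convergent tail in pairs gives $|R_n|=\sum_{i\geq 0}\Dl a_{n+1+2i}$ and $|R_{n+1}|=\sum_{i\geq 0}\Dl a_{n+2+2i}$, whence term-by-term subtraction yields
\begin{equation*}
|R_n|-|R_{n+1}| = \sum_{i\geq 0}\Dl^2 a_{n+1+2i}\ \geq\ 0.
\end{equation*}
For the strict inequality I would note that $(\Dl a_n)$, being nonnegative and decreasing to zero, cannot be eventually constant, so it has infinitely many strict drops; hence the displayed sum is genuinely positive and $|R_n|>|R_{n+1}|$.

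The main obstacle — really the one idea on which the whole argument turns — is spotting the complementary-remainder identity $|R_n|+|R_{n+1}|=a_{n+1}$, which collapses a two-sided estimate into a single monotonicity claim; after that, the convexity hypothesis feeds in cleanly through the pairwise grouping and the nonnegativity of the second differences. The only point demanding a little care is the passage from $\geq$ to the strict inequalities in \eqref{E2}, i.e.\ confirming that not all the relevant second differences vanish.
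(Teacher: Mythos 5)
Your route is genuinely different from the paper's and, for the weak inequalities $a_{n+1}/2 \leq |R_n| \leq a_n/2$, it is correct and arguably more elementary. The paper follows Young: it introduces the auxiliary sequence $T_n = S_n + (-1)^n a_{n+1}/2$, observes that $T_{n+1}-T_n=(-1)^n\Delta a_{n+1}$ has the same Leibniz form as $S_{n+1}-S_n=(-1)^n a_{n+1}$, and then squeezes $L$ between $T_n$ and $T_{n-1}$ using Cantor's theorem on nested intervals. Your complementary-remainder identity $|R_n|+|R_{n+1}|=a_{n+1}$ achieves the same reduction with no auxiliary sequence at all: it converts both bounds into the single assertion that $(|R_n|)$ is decreasing, which you then verify by pairing the tail into first differences. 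What the paper's construction buys is iterability --- the same $T$-device stacks to give Johnsonbaugh's theorem and Euler's transformation later in the paper --- whereas your identity is a clean one-shot argument tailored to Calabrese's case.

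The gap is in your strictness step. First, a nonnegative sequence decreasing to zero \emph{can} be eventually constant (namely, eventually zero), so that sentence is false as stated, though harmless if one assumes $a_n>0$. More seriously, even granting infinitely many strict drops of $(\Delta a_n)$, your sum $\sum_{i\geq 0}\Delta^2 a_{n+1+2i}$ ranges only over indices of one parity, and all of the strict drops could occur at the other parity. Concretely, take $\Delta a_{2j-1}=\Delta a_{2j}=2^{-j}$, i.e.\ $(a_n)=\bigl(2,\tfrac{3}{2},1,\tfrac{3}{4},\tfrac{1}{2},\tfrac{3}{8},\dots\bigr)$: then $(a_n)$ strictly decreases to zero, $(\Delta a_n)$ is nonincreasing with limit zero and has infinitely many strict drops, yet $|R_2|=\Delta a_3+\Delta a_5+\Delta a_7+\cdots=\tfrac{1}{4}+\tfrac{1}{8}+\cdots=\tfrac{1}{2}=a_3/2$, so the strict lower bound fails. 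This shows that the strict inequalities actually require slightly more than the hypotheses as literally stated (for instance that $(\Delta a_n)$ be \emph{strictly} decreasing, in which case every term of your sum is positive and your argument closes immediately); in fairness, the paper's own proof only establishes $L-S_n\geq T_n-S_n$ and $L-S_n\leq T_{n-1}-S_n$ and is silent on the very same point.
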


Calabrese's refined error estimate allowed him to prove the very pretty result that the \emph{first} partial sum of Leibniz's series 
$$\ln 2=1-\frac{1}{2}+\frac{1}{3}-\frac{1}{4}+\cdots$$which approximates $\ln 2$ with four decimal places of accuracy is $S_{10000}$.

Seventeen years later, in~1979, Richard Johnsonbaugh
published the following refinement~\cite{John} of Calabrese's result.

\begin{thm} 
Let 
$\Dl^r a_n := \Dl^{r-1} a_n - \Dl^{r-1} a_{n+1}$ for
$r = 1,2,3,\dots,k$. If all the sequences $(\Dl^r a_n)$ for
$r=1,2,3,\dots,k$  decrease monotonically to zero, then 
\begin{equation}
\label{E3} 
\frac{a_{n+1}}{2} + \frac{\Dl a_{n+1}}{2^2}
+\cdots+ \frac{\Dl^{k} a_{n+1}}{2^{k+1}}
< |R_n| < \frac{a_n}{2} - \biggl\{
\frac{\Dl a_n}{2^2} +\cdots+ \frac{\Dl^k a_n}{2^{k+1}} \biggr\}.
\end{equation}
\end{thm}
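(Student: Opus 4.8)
The plan is to reduce the entire statement to Leibniz's estimate (Theorem~1) applied to a \emph{differenced} sequence. Write $E_n := |R_n|$. Since Theorem~1 already tells us that $R_n$ carries the sign $(-1)^n$, we have $E_n = (-1)^n R_n = a_{n+1} - a_{n+2} + a_{n+3} - \cdots$, and splitting off the leading term produces the self-similar recursion $E_n + E_{n+1} = a_{n+1}$. Writing $\Dl E_n = E_n - E_{n+1}$ exactly as for the $a$'s, this rearranges to
\begin{equation*}
E_n = \frac{a_{n+1}}{2} + \frac{\Dl E_n}{2}.
\end{equation*}

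This identity holds for every $n$, so I may apply the difference operator $\Dl$ to it repeatedly; because $\Dl$ commutes with the shift $n \mapsto n+1$, each application merely raises every order by one. Back-substituting the resulting expressions for $\Dl E_n, \Dl^2 E_n, \dots$ yields the exact finite identity
\begin{equation*}
E_n = \sum_{r=0}^{k} \frac{\Dl^r a_{n+1}}{2^{r+1}} + \frac{\Dl^{k+1} E_n}{2^{k+1}},
\end{equation*}
in which the displayed sum is precisely the lower bound in~\eqref{E3}. Hence the lower bound is equivalent to the single inequality $\Dl^{k+1} E_n > 0$. The upper bound then comes essentially for free: from $E_{n-1} + E_n = a_n$ together with the lower bound applied at $n-1$ one gets $E_n = a_n - E_{n-1} < a_n - \sum_{r=0}^{k} \Dl^r a_n / 2^{r+1}$, and the elementary collapse $a_n - \sum_{r=0}^{k} \Dl^r a_n / 2^{r+1} = a_n/2 - \{\Dl a_n/2^2 + \cdots + \Dl^k a_n/2^{k+1}\}$ reproduces exactly the right-hand side of~\eqref{E3}. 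So everything rests on the positivity of the remainder.

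The main obstacle is that positivity. Since $\Dl$ commutes with the shift, differencing the series for $E_n$ termwise gives $\Dl^{k+1} E_n = \sum_{j \geq 1} (-1)^{j-1} \Dl^{k+1} a_{n+j}$, so the remainder is itself an alternating series---it is the Leibniz tail of the differenced sequence $(\Dl^{k+1} a_m)_m$. By Theorem~1 this tail is positive (and carries the correct sign) as soon as $(\Dl^{k+1} a_m)_m$ is nonnegative and decreases monotonically to zero. Establishing that the standing hypotheses genuinely deliver this is the heart of the matter: monotone decrease of $(\Dl^{k} a_n)$ to zero gives $\Dl^{k+1} a_n \geq 0$ at once, but the positivity of a Leibniz tail needs the \emph{terms} to decrease, that is, one further layer of monotonicity. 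I therefore expect the real work to lie in pinning down exactly how many of the difference sequences must be assumed monotone, and in extracting the strict inequalities (and the sign $(-1)^n$), rather than in the algebra of the identity, which is automatic once the recursion $E_n + E_{n+1} = a_{n+1}$ is in hand.
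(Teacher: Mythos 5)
Your algebraic framework is sound and genuinely different from the paper's nested--interval argument: writing $E_n:=|R_n|$, the recursion $E_n+E_{n+1}=a_{n+1}$ and the exact identity $E_n=\sum_{r=0}^{k}\Dl^r a_{n+1}/2^{r+1}+\Dl^{k+1}E_n/2^{k+1}$ are correct (this identity is precisely the paper's two relations $T^{(k)}_n-S_n=(-1)^n(\cdots)$ and $L-T^{(k)}_n=(-1)^n\Dl^{k+1}E_n/2^{k+1}$ rolled into one line), and deducing the upper bound at $n$ from the lower bound at $n-1$ via $E_{n-1}+E_n=a_n$ is exactly right. But the proof is not finished. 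You have reduced everything to the single claim $\Dl^{k+1}E_n>0$, and that claim is where you stop: announcing that ``the real work lies in pinning down how many difference sequences must be monotone'' is a diagnosis, not a proof. As submitted, the decisive inequality of the theorem is unproved.

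Your suspicion is, however, well founded, and it is worth making it concrete. Under the literal hypotheses ($(\Dl^r a_n)$ monotone only for $r\leq k$) the positivity genuinely fails: for $k=1$ take $a=(2,1,0,0,\dots)$, so that $(a_n)$ and $(\Dl a_n)=(1,1,0,0,\dots)$ both decrease (weakly) to zero, yet $|R_1|=1$ while the claimed upper bound is $a_1/2-\Dl a_1/2^2=3/4$; a small perturbation makes all the monotonicities strict. What is needed is exactly one more layer: if $(\Dl^{k+1}a_n)$ also decreases monotonically to zero, then $\Dl^{k+1}E_n=\sum_{j\geq 1}(-1)^{j-1}\Dl^{k+1}a_{n+j}$ is a Leibniz tail with nonnegative decreasing terms, Theorem~1 applies to the differenced sequence, and your argument closes (strictness still needs a separate word). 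Note that the paper's own proof tacitly uses this same extra layer: the nesting of the intervals $[T^{(k)}_{2r},T^{(k)}_{2r-1}]$ requires $T^{(k)}_{2r+2}-T^{(k)}_{2r}=\Dl^{k+2}a_{2r+1}/2^{k+1}\geq 0$, i.e.\ monotonicity of $(\Dl^{k+1}a_n)$, even though the text cites only the monotone decrease of $\Dl^{k}a_{n+1}$. So the missing step is not a defect of your route in particular, but you must either supply the extra hypothesis explicitly and invoke Theorem~1 for the tail, or find another argument for $\Dl^{k+1}E_n>0$; at present the heart of the theorem is asserted rather than proved.
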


For example, Johnsonbaugh uses  the sharper upper bound estimate \eqref{E3} with $k=2$ to prove the remarkably precise result that the \emph{first} partial sum, $S_n$, of the series
$$\frac{\pi}{4}=1-\frac{1}{3}+\frac{1}{5}-\frac{1}{7}+\cdots$$that approximates $\frac{\pi}{4}$ with four decimal places of accuracy is $S_{5000}.$  Here the precision means that although $4999$ terms of the series do not give four decimal places of accuracy, subtracting just one more term, $\dfrac{1}{9999}$, \emph{does} give it.  This underlines the slowness of the convergence of Leibniz's formula for $\pi$.  However, we will show below how to spectacularly accelerate the convergence of this same series.

The \emph{lower} bound for $|R_n|$ in Johnsonbaugh's theorem (and, in
particular, in Calabrese's) had already been found by
L.\,D. Ames in 1901 \cite{Ames}, but the upper bound is~new.

Finally, in 1985, Robert M. Young \cite{Young} used 
Cantor's theorem on nested intervals to give an elegant new
proof of Calabrese's refinement which makes the error estimates almost
intuitive. 

It does not seem to have been noticed that Young's method can be
adapted to give a new and transparent proof of Johnsonbaugh's
refinement.

We elaborate such a proof in this note.

\section{Proof of Johnsonbaugh's Theorem} 

Young points out that the crux of Leibniz's original proof of his
theorem is this: if the numbers $S_1,S_2,\dots$ satisfy the relation
\begin{equation}
\label{Young} 
S_{n+1} - S_n = (-1)^n a_{n+1},
\end{equation}
then, since the sequence $a_{n+1}$ decreases to zero, the sequence of
closed intervals
$$
[S_2,S_1], \quad [S_4,S_3], \quad [S_6,S_5],\ \cdots
$$
is~\emph{nested}. Therefore, Cantor's theorem on nested intervals
shows that there is a number~$L$ common to all these intervals, and
that
$$
L = \lim_{n\to\infty} S_n,
$$
whereby Leibniz's error estimate is immediate.

To prove Calabrese's theorem, Young defines
\begin{equation}
\label{Tn} 
T_n := S_n + (-1)^n \cdot \frac{a_{n+1}}{2} \,.
\end{equation}
Then
$$
T_{n+1} - T_n = (-1)^n \Dl a_{n+1},
$$
which has the same form as~\eqref{Young}. We note that 
$$
[S_{2r}, S_{2r-1}] \supseteq [T_{2r}, T_{2r-1}]
\word{for} r = 1,2,\dots
$$
Since $\Dl a_{n+1}$ decreases to zero, Cantor's theorem on nested
intervals shows that there is a number, $L_1 = L$, common to all the
intervals
$$
[T_2,T_1], \quad [T_4,T_3], \quad [T_6,T_5],\ \cdots
$$
and that 
$$
L=\lim_{n\to\infty} T_n.
$$
Now
$$
T_n - S_n
= (-1)^n \cdot \frac{a_{n+1}}{2}
$$
and
\begin{align*}
T_{n-1} - S_n
&= S_{n-1} + (-1)^{n-1} \cdot \frac{a_n}{2} - S_n
\\
&= (-1)^{n-1} \cdot \frac{a_n}{2} - (-1)^{n-1} a_n
= (-1)^n a_n.
\end{align*}
Therefore, if $n$ is \emph{even},
\begin{align*}
L - S_n &\geq T_n - S_n = \frac{a_{n+1}}{2} \,,
\\
L - S_n &\leq T_{n-1} - S_n = \frac{a_n}{2} \,,
\end{align*}
which are Calabrese's inequalities. A similar argument holds if $n$
is~\emph{odd}.

To prove Johnsonbaugh's theorem, we have to suitably
generalize~\eqref{Tn}. Let
\begin{align*}
T_n   &:= S_n + (-1)^n \cdot \frac{a_{n+1}}{2} \,,
\\
T'_n  &:= T_n + (-1)^n \cdot \frac{\Dl a_{n+1}}{2^2} \,,
\\
T''_n &:= T'_n + (-1)^n \cdot \frac{\Dl^2 a_{n+1}}{2^3} \,,
\\ 
\vdots &\hspace*{5em} \vdots
\\
T^{(k)}_n 
&:= T^{(k-1)}_n + (-1)^n \cdot \frac{\Dl^k a_{n+1}}{2^{k+1}} \,.
\end{align*}
Now we apply the reasoning we had already applied to $S_n$ and~$T_n$.
We note that 
$$
[S_{2r}, S_{2r-1}] \supseteq [T_{2r}, T_{2r-1}]
\supseteq [T'_{2r}, T'_{2r-1}]
\supseteq\cdots\supseteq [T^{(k)}_{2r}, T^{(k)}_{2r-1}]
$$
for $r = 1,2,\dots\,$.

Since $\Dl^k a_{n+1}$ decreases to zero, Cantor's theorem on nested
intervals shows that there is a number, $L_k = L$, common to all the
intervals
$$
[T^{(k)}_2, T^{(k)}_1], \quad [T^{(k)}_4, T^{(k)}_3], \quad
[T^{(k)}_6, T^{(k)}_5],\ \cdots
$$
and that 
$$
L=\lim_{n\to\infty} T^{(k)}_n.
$$
Substituting recursively in the definitions of $T^{(r)}_n$ for
$r = 1,2,\dots$, some simple algebra leads us to the equations
\begin{align*}
T^{(k)}_n - S_n
&= (-1)^n \biggr( \frac{a_{n+1}}{2} + \frac{\Dl a_{n+1}}{2^2}
+\cdots+ \frac{\Dl^k a_{n+1}}{2^{k+1}} \biggr),
\\
T^{(k)}_{n-1} - S_n
&= (-1)^n \biggr( \frac{a_{n}}{2} - \frac{\Dl a_{n}}{2^2}
-\cdots- \frac{\Dl^k a_{n}}{2^{k+1}} \biggr).
\end{align*}

Therefore, if $n$ is \emph{even},
\begin{align*}
L - S_n &\geq T^{(k)}_n - S_n 
= \frac{a_{n+1}}{2} + \frac{\Dl a_{n+1}}{2^2}
+\cdots+ \frac{\Dl^k a_{n+1}}{2^{k+1}} \,,
\\
L - S_n &\leq T^{(k)}_{n-1} - S_n 
= \frac{a_{n}}{2} - \frac{\Dl a_{n}}{2^2}
-\cdots- \frac{\Dl^k a_{n}}{2^{k+1}} \,,
\end{align*}
which are Johnsonbaugh's inequalities. A similar argument holds for
$n$~\emph{odd}. This completes the proof.
\qed  

\section{Euler's transformation}

We saw that Leibniz's series for $\log 2$ and $\dfrac{\pi}{4}$, while esthetically pleasing,  are useless for practical computation because of the slowness of convergence.  To rectify this situation, Euler stated the following important transformation formula:
\begin{thm}

If all the sequences $(\Dl^r a_n)$ for
$r=1,2,3,\dots$  decrease monotonically to zero, then the  ``Euler transform series" \begin{equation}\label{E}
\frac{a_1}{2}+\frac{\Dl a_1}{2^2}+\frac{\Dl^2 a_1}{2^3}+\frac{\Dl^3 a_1}{2^4}+\cdots
\end{equation}of the alternating series $a_1-a_2+a_3-+\cdots $ also converges and, indeed, to the same sum, $L$. If \begin{equation}
\label{En}
E_n: = \frac{a_1}{2}+\frac{\Dl a_1}{2^2}+\frac{\Dl^2 a_1}{2^3}+\frac{\Dl^3 a_1}{2^4}+\cdots+\frac{\Dl^{n-1} a_1}{2^{n}}
\end{equation}be the $n$-th partial sum, then the error, $r_n:=L-E_n$, in the approximation $L\approx E_n$ satisfies 
\begin{equation}
\label{rn}
0<r_n\leq \frac{\Dl^n a_1}{2^n}.
\end{equation}which, shows that $E_n$ underestimates $L$.
\end{thm}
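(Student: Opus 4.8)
The plan is to recast Euler's transformation as a statement about the \emph{tail sums} of the series and then read off~\eqref{rn} as a one-line geometric estimate. For each $m\ge 1$ I set
$$
M(m):=\sum_{j=0}^{\infty}\frac{\Dl^{j}a_m}{2^{j+1}},\qquad \Dl^{0}a_m:=a_m,
$$
so that the Euler transform series is $M(1)$ and $E_n$ from~\eqref{En} is its $n$-th partial sum. First I would record the two elementary consequences of the hypothesis that I will use throughout: every $\Dl^{r}a_n\ge 0$, and, because $\Dl^{j+1}a_m=\Dl^{j}a_m-\Dl^{j}a_{m+1}\le\Dl^{j}a_m$, the numbers $\Dl^{j}a_m$ are non-increasing in the \emph{order}~$j$ and hence bounded above by $\Dl^{0}a_m=a_m$. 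It follows that every $M(m)$ converges, with $0\le M(m)\le a_m$; in particular the Euler series converges, so it only remains to identify its sum as $L$ and to estimate the tail.

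The heart of the matter is the functional equation
$$
M(m)=a_m-M(m+1).
$$
To obtain it I would split off the $j=0$ term of $M(m)$, substitute $\Dl^{j}a_m=\Dl^{j-1}a_m-\Dl^{j-1}a_{m+1}$ in the remaining terms, and reindex; the two series that appear are exactly $\tfrac12 M(m)$ and $\tfrac12 M(m+1)$, and solving the resulting linear relation gives the claim. (Every rearrangement is legitimate, the terms being nonnegative and summable.) Iterating this recurrence telescopes it, in the same spirit as the paper's other telescopings, into
$$
M(1)=\sum_{i=1}^{N}(-1)^{i-1}a_i+(-1)^{N}M(N+1)=S_N+(-1)^{N}M(N+1),
$$
and letting $N\to\infty$ identifies the sum: $S_N\to L$ by Leibniz's theorem, while $0\le M(N+1)\le a_{N+1}\to 0$, so $M(1)=L$. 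I expect this identification to be the main obstacle, precisely because the tempting shortcut fails. One might hope that the gap $L-E_n$ is controlled by the single top-order difference $\Dl^{n}a_1$ and shrinks with it; but $\Dl^{n}a_1$ need not tend to zero at all — for a geometric-type tail it tends to a positive constant — so the convergence $E_n\to L$ genuinely requires the telescoping above rather than a crude estimate of one difference.

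With $M(1)=L$ established, the bound~\eqref{rn} is immediate. Since $E_n$ is the $n$-th partial sum of $M(1)=L$, its tail is
$$
r_n=L-E_n=\sum_{j=n}^{\infty}\frac{\Dl^{j}a_1}{2^{j+1}}.
$$
All terms are positive, so $r_n>0$ and $E_n$ underestimates $L$. For the upper bound I would invoke the order-monotonicity $\Dl^{j}a_1\le\Dl^{n}a_1$ for $j\ge n$ noted above, which yields
$$
r_n\le\Dl^{n}a_1\sum_{j=n}^{\infty}\frac{1}{2^{j+1}}=\frac{\Dl^{n}a_1}{2^{n}},
$$
which is precisely~\eqref{rn}. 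This would complete the proof.
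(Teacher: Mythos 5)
Your proof is correct, and it reaches the theorem by a genuinely different route from the paper's. The paper recycles the machinery of its proof of Johnsonbaugh's theorem: from $T^{(k)}_{n+1}-T^{(k)}_{n}=(-1)^{n}\Dl^{k+1}a_{n+1}/2^{k+1}$ and Cantor's nested-interval theorem it obtains $|L-T^{(k)}_{n}|<\Dl^{k+1}a_{n+1}/2^{k+1}$, then sets $n=0$ and observes that $T^{(k)}_{0}=E_{k+1}$, which delivers the two-sided inequality \eqref{K} at once. You instead introduce the full Euler tails $M(m)=\sum_{j\ge 0}\Dl^{j}a_{m}/2^{j+1}$, establish the recursion $M(m)=a_{m}-M(m+1)$, telescope in $m$ to get $M(1)=S_{N}+(-1)^{N}M(N+1)$ and hence $M(1)=L$, and then bound the nonnegative tail $\sum_{j\ge n}\Dl^{j}a_{1}/2^{j+1}$ by a geometric series using the order-monotonicity $\Dl^{j}a_{1}\le\Dl^{n}a_{1}$ for $j\ge n$; all the manipulations are legitimate because every term in sight is nonnegative and dominated by $a_{m}$. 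Your argument is self-contained --- it needs neither the $T^{(k)}_{n}$ nor Cantor's theorem, only the nonnegativity of all iterated differences together with $a_{n}\to 0$ --- and the recursion $M(m)=a_{m}-M(m+1)$ is arguably the conceptual heart of Euler's transformation. What the paper's route buys in exchange is that it costs nothing extra once Section~2 is in place, and it exhibits $E_{n}$ as $T^{(n-1)}_{0}$, which ties the transformation to the acceleration scheme already developed. Two small repairs to your write-up: the strict inequality $r_{n}>0$ needs the observation that $\Dl^{n}a_{1}=0$ would force $a_{m}\equiv 0$, since your tail terms are a priori only nonnegative (the paper's strict inequalities carry the same latent degeneracy); and your parenthetical example is off --- for a geometric $a_{n}=x^{n-1}$ with $0<x<1$ one has $\Dl^{n}a_{1}=(1-x)^{n}\to 0$ --- although the point it serves is sound, e.g.\ $a_{1}=1$ and $a_{n}=0$ for $n\ge 2$ gives $\Dl^{n}a_{1}=1$ for every $n$, so the convergence $E_{n}\to L$ indeed cannot be read off from the single difference $\Dl^{n}a_{1}$.
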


\begin{proof}
A simple computation shows that that \begin{equation*}
\label{Tk}
T_{n+1}^{(k)}-T_{n}^{(k)}=(-1)^n\frac{\Dl^{k+1}a_{n+1}}{2^{k+1}}.
\end{equation*}By Cantor's theorem the intervals with end-points $T_{n}^{(k)}$ and $T_{n+1}^{(k)}$, in that or opposite order depending on the parity of $n$, close down on $L$, and in particular, $L$ is in all of them.   Therefore
\begin{equation}
\label{ }
|L-T_{n}^{(k)}|<\frac{\Dl^{k+1}a_{n+1}}{2^{k+1}}.
\end{equation}

Taking $n=0$ and then $k=n-1$, we obtain the inequality
\begin{equation}
\label{K}
\frac{a_1}{2}+\frac{\Dl a_1}{2^2}+\cdots+\frac{\Dl^{n-1} a_1}{2^{n}}<L<\frac{a_1}{2}+\frac{\Dl a_1}{2^2}+\cdots+\frac{\Dl^{n-1} a_1}{2^{n}}+\frac{\Dl^{n}a_{1}}{2^{n}},
\end{equation}But, the left-hand side is $E_n$, and therefore the error estimate \eqref{rn} is valid.  

\end{proof}

As an example of Euler's transformation let's compute $\dfrac{\pi}{4}$ with four decimal digit accuracy from Leibniz's series.  We already know that the first partial sum of the Leibniz series that achieves this accuracy is $S_{5000}$.  If we  compute the differences in the Euler transformation we find that 
$$\Dl^n a_1=2^n\frac{1\cdot 2\cdot 3\cdots n}{1\cdot 3\cdot 5\cdots 2n+1}.$$The error estimate shows that $n$ must satisfy
$$\frac{1\cdot 2\cdot 3\cdots n}{1\cdot 3\cdot 5\cdots 2n+1}\leq \frac{1}{20000}$$and the first value of $n$ which works is $n=13$ with an upper bound for the error given by $2.91*10^{-5}$.  Therefore, our theorem states that
\begin{align*}
E_{13}=&\frac{1}{2}\left\{1+\frac{1}{3}+\frac{1\cdot 2}{3\cdot 5}+\cdots +\frac{1\cdot 2\cdot 3\cdots 12}{1\cdot 3\cdot 5\cdots 25}\right\}\\ 
           =&\frac{1314078208}{1673196525}\\
           =&0.78536991...
\end{align*}approximates $\dfrac{\pi}{4}$ with an error no larger than $2.91*10^{-5}$.  In fact the true error is $2.852*10^{-5}$ which is impresively close to the predicted upper bound for the error.  Moreover \emph{one only needs $13$ summands} instead of $5000$ to reach the desired accuracy, an extraordinary acceleration of the rate of convergence.

In practice one normally computes a partial sum of the series exactly, and then uses the Euler transform to compute the remainder.  For example if we compute $S_{10}$ exactly and apply Euler's transform to the next $11$ summands we obtain $0.785398163$ which is correct to \emph{nine} decimal places!

We note that the inequality \eqref{K} appears as a problem (without a solution) on p.$270$ of \cite{Knopp}.

\subsection*{Acknowledgment}

We thank the referees for suggestions which substantially improved the style and content of this paper.  In particular we thank a referee for suggesting the relation of our method of proof to Euler's transformation. Support from the Vicerrector\'ia de Investigaci\'on of the 
University of Costa Rica is gratefully acknowledged.

\end{document}